\definecolor{webred}{rgb}{0.75,0,0}
\definecolor{webgreen}{rgb}{0,0.75,0}
\numberwithin{equation}{section}
\date{\today}
\def\captionof#1#2{{\def\@captype{#1}#2}}
\newtheorem{definition}{D\'efinition}[section]
\newtheorem{proposition}{Proposition}[section]
\newtheorem{remark}{\bf Remark}
\newtheorem{theorem}{\bf Theorem}[section]
\def\bgneqnn{\begin{equation}}
\def\endeqnn{\end{equation}}
\def\bgneqy{\begin{eqnarray}}
\def\endeqy{\end{eqnarray}}
\def\bgneqy*{\begin{eqnarray*}}
\def\endeqy*{\end{eqnarray*}}
\def\text{\mbox}
\def\bgneqy*{\begin{eqnarray*}}
\def\endeqy*{\end{eqnarray*}}
\def\qed{\hfill$\blacksquare$\par\bigskip}
\newcounter{tablegroup}
\newcounter{subtable}[tablegroup]
\newcommand{\handletables}
\begin{document}

\title{Discrete Energy behavior of a damped Timoshenko system.  }



\author{Chebbi Sabrine         \and
        Hamouda Makram$^{*}$} 

\address{$^{*}$ University of Tunis El Manar, Faculty of Sciences of Tunis, Department of Mathematics, Tunis,  Tunisia.}

\maketitle

\begin{abstract}
In this article, we consider a one-dimensional Timoshenko system subject to different types of dissipation (linear and nonlinear dampings). Based on a combination between the finite element  and the finite difference methods, we design a discretization scheme for the different Timoshenko systems under consideration. We first come up with a numerical scheme to the free-undamped Timoshenko system. Then, we adapt this numerical scheme to the corresponding linear and nonlinear damped systems.  Interestingly, this scheme reaches to reproduce the most important properties of the discrete energy. Namely, we show for the discrete energy the positivity, the energy conservation property and the different decay rate profiles.  We  numerically reproduce the known analytical results  established on the decay rate of the energy associated with each type of dissipation.
\end{abstract}

\section{Introduction}\label{intro}
Since the pioneer work of Timoshenko \cite{45} it is now well-known that, in the general theory of  structure's study, the Timoshenko system is a good approximation of beam transverse vibrations. More precisely, these movements can be modeled by a set of two coupled wave equations of the form:
 \begin{equation} \label{0}
     \left \{ \begin{array}{lrl}
 \rho \varphi_{tt} - K(\varphi_x+ \psi)_x=0,  \hspace{2cm } (x,t) \in (0,L)\times \mathbb{R}_+,\\
         I_{\rho}\psi_{tt}-(EI\psi_x)_x+K(\varphi_x+\psi)=0, \hspace{0.3cm }(x,t) \in (0,L)\times \mathbb{R}_+,
          \end{array}\right.
       \end{equation}
where  $t$ is the time, $x$ is the position coordinate along the beam, $\varphi(t,x)$ is the transverse displacement of the beam around an equilibrium state, and $\psi(x,t)$ is the rotation angle of the filament of the beam.  The coefficients $\rho$, $I_{\rho}$, $E$, $I$, and $K$ are, respectively, the density, the polar moment of inertia across a section, the elasticity Young modulus, the moment of inertia across a section and the shear modulus.\\

In the last decades, the study of Timoshenko systems have been attracting the attention of many researchers and the question of the introduction of damping terms  and their influence on the behavior of the solution of  (\ref{0}) are of great interest both for mathematicians and engineers.  Hence, the stabilization of the damped system related to (\ref{0}) is one of the main objective of several studies. In this direction, the present article aims to give some light on the numerical study of the relationship between stabilization and optimality completing thus the theoretical part carried out in \cite{sab}.\\

First, we recall that the exponential stability is  known for Timoshenko systems on bounded domains when   the two linear damping terms $\varphi_t$ and $\psi_t$ are considered in the left-hand sides of the first and second equations of  (\ref{0}), respectively (see e.g. \cite{8}).  In  \cite{5}, the authors considered the one-dimensional system with a linear damping term
as follows:
 \begin{equation} \label{2}
     \left \{ \begin{array}{lrl}
 \rho_1 \varphi_{tt} - k(\varphi_x+ \psi)_x=0,  \hspace{2.2cm } (x,t) \in (0,L)\times \mathbb{R}_+,\\
         \rho_2\psi_{tt}-b\psi_{xx}+k(\varphi_x+\psi)+d \psi_t=0, \hspace{0.18cm }(x,t) \in (0,L)\times \mathbb{R}_+,
       \end{array}\right.
       \end{equation}
and they proved that the solution of the system (\ref{1}) is exponentially stable if and only if the wave speeds are equal $(\frac{k}{\rho_1}=\frac{b}{\rho_2}).$ Such a result has been the common point in several works  with  different types of dissipation \cite{1,5,3,4,2}.  \\

Second, for the nonlinear damped Timoshenko system having a    damping term with no growth assumption at the origin, Alabau-Boussouira \cite{7} established  a general semi-explicit formula for the decay rate of the energy at infinity in the case of equal  speeds of propagation,  and she  proved a polynomial decay in  the case of different speeds of propagation for both linear and nonlinear globally Lipschitz feedbacks. Later, in  \cite{alab}, Alabau-Boussouira also established a strong lower energy estimate for the strong solutions of nonlinearly damped Timoshenko beams and, as an extension of this result, for the nonlinearly damped Timoshenko system with thermoelasticity (see \cite{sab}). From the numerical point of view, the authors in \cite{ali} used a fourth-order finite difference scheme to compute the numerical solutions of the Timoshenko system with thermoelasticity with second sound (coupled with the Cattaneo Law and giving rise to a system with four equations). To this end, the authors in \cite{ali} adapted the method used in \cite{154} and  obtained  the decay rate of the discrete solutions.\\
     
      Recently more attention was given to  the numerical study of the Timoshenko systems (see e.g. \cite{154}).  The present article is mainly concerned with the numerical decay rate of the discrete energy associated with the  solution of the Timoshenko system that we will set subsequently.  We start our study by introducing a spatial discretization using a classical finite element method based on Galerkin approximation.  Then, we continue to design a discretization scheme using  the finite difference method for the time derivative terms and thus we prove  the energy decay rates for the discrete energy which will be, as we will see later, in a good agreement with the results  obtained in the the theoretical context.\\

    Related to the objective of this paper, and to the best of our knowledge, there are few results in the literature concerning the numerical study of one-dimensional Timoshenko systems. We give here a quick overview of the available results in this direction. In  \cite{cons}, aiming to analyze the energy properties of some linearly elastic constant coefficient Timoshenko systems, the author presents a parameterized family of finite-difference schemes  and the emphasis was on the shear deformation and the  rotatory inertia. The proof in \cite{cons} relies on discrete multiplier techniques. Also, in \cite{w}, the numerical exponential decay  rate of the energy of a dissipative Bresse system was obtained using a finite difference method. Nevertheless, it is well known that the Bresse systems are somehow  a generalization of the  Timoshenko beam equations.   For further details about the subject the reader may consult e.g. \cite{Balduzzi,Brandts,Niemi,Papukashvili,Scott} and the references therein.\\

    The remainder of the paper is organized as follows. In section \ref{sec:1} we introduce the numerical scheme using a finite element discretization in space that we apply to the Timoshenko equations taking advantage of the Galerkin approximation method. Next, we employ to the time derivative terms a  finite difference discretization method. We then establish the energy conservation property for the discrete solution of the Timoshenko beam model. In Section \ref{sec:2} we  prove the exponential stability in the presence of a linear damping and the polynomial stability in the case of a nonlinear damping.
Finally, in Section \ref{sec:3}, we discuss the numerical aspect of the energy and we conclude our work.

\section{Energy Conservation Property of the Timoshenko Equations}
\label{sec:1}
 In this section, we present a numerical method related to the solution of the vibrating
Timoshenko beam equations that are given by
 \begin{equation} \label{1}
     \left \{ \begin{array}{lrl}
 \rho_1 \varphi_{tt} - k(\varphi_x+ \psi)_x=0,  \hspace{1.4cm } (x,t) \in (0,L)\times \mathbb{R}_+,\\
         \rho_2\psi_{tt}-b\psi_{xx}+k(\varphi_x+\psi)=0, \hspace{0.4cm }(x,t) \in (0,L)\times \mathbb{R}_+,
          \end{array}\right.
       \end{equation}
and for which we associate the following  initial conditions:
        \begin{equation} \label{8}
     \left \{ \begin{array}{lrl}
\varphi(0,x)=\varphi_0(x), \
\psi(0,x)=\psi_0(x), \hspace{0.5cm} \forall x \in (0,1),\\
\varphi_t(0,x)=\varphi_1(x), \
\psi_t(0,x)=\psi_1(x),\hspace{0.3cm} \forall x \in (0,1),
       \end{array}\right.
       \end{equation}
together with  the Dirichlet  boundary conditions as below
       \begin{equation}\label{BC}
       \varphi=\psi=0, \hspace{0.4cm} \text{at} \ x=0, L.
       \end{equation}

The energy  associated with the solution $U:=(\varphi, \psi)^T$ of (\ref{1})--(\ref{BC}) is defined by
       \begin{equation}
  E(U,t):=\frac{1}{2} \int_0^L  \left(\rho_1\varphi_t^2+\rho_2\psi_t^2+b\psi_x^2+k(\varphi_x+\psi )^2\right)dx.
\end{equation}
In what follows we will simply use $E(t)$ instead of $E(U,t)$ to make the presentation simpler.\\
Now, taking into account the boundary conditions (\ref{BC}), we obtain
\begin{equation}
\frac{dE(t)}{dt}=0, \hspace{0.4cm} \forall \ t\in [0,T],
\end{equation}
which states the conservation of the energy and this can be expressed as follows:
\begin{equation}\label{cosrv}
\begin{tabular}{ll}
$
\displaystyle{ E(t)=E(0) :=\frac{1}{\rho_1} \int_0^L |\varphi_1(x)|^2 dx +\int_0^L|\psi_1(x)|^2 dx +} $\\

$ \hspace{0.9cm} \displaystyle{+\frac{b}{2}\int_0^L |\psi_{x}(0,x)|^2 dx+\frac{k}{2}\int_0^L |\varphi_{x}(0,x)+\psi_0(x)|^2 dx},$  $ \hspace{0.4cm}  \forall \  t\geq 0. $
\end{tabular}
\end{equation}

This energy conservation property implies that the Timoshenko equations are purely conservative.
Therefore, it is important to show that the  numerical solution of the Timoshenko equations  consistently preserves the property (\ref{cosrv}) as well, that is  the  discrete energy  will obey the energy conservation property.
  \subsection{Semi-discrete finite element scheme}
  In order to obtain the discrete energy, we  first consider a numerical scheme using finite element methods and we reproduce numerically the analytical results established  for the   Timoshenko system (\ref{2}) in the case where
 $\rho_1=\rho_2=1$ and $b=k=1$, that is  the speed waves are equal.\\

  For instance we consider  the following undamped Timoshenko problem (\textit{i.e.} (\ref{2}) with $d=0$),
 \begin{equation} \label{22}
     \left \{ \begin{array}{lrl}
\varphi_{tt} -(\varphi_x+ \psi)_x=0,  \hspace{1.4cm } (x,t) \in (0,L)\times \mathbb{R}_+,\\
         \psi_{tt}-(\psi_x)_x+(\varphi_x+\psi)=0, \hspace{0.3cm }(x,t) \in (0,L)\times \mathbb{R}_+.
          \end{array}\right.
       \end{equation}
 Then, we set $u=\varphi_t$, $v=\psi_t$ and  we rewrite the system (\ref{22}) as follows:
   \begin{equation} \label{002}
u_t=\varphi_{xx}+\psi_{x},\hspace{0.4cm} \forall \ x \in (0,L), \  \forall \  t>0,
  \end{equation}
  \begin{equation}\label{122}
v_t=\psi_{xx}-\varphi_{x}-\psi, \hspace{0.4cm} \forall \ x \in (0,L), \  \forall \ t>0,
       \end{equation}
  and as before we associate with \eqref{002}-\eqref{122} the  Dirichlet  boundary conditions \eqref{BC}.

       The variational form is required to approximate solutions with  the finite element methods.
We multiply the equations (\ref{002}) and (\ref{122}) with arbitrary test functions $\textbf{u}$ and $\textbf{v}$, respectively, and we use the following  notation   for convenience,
$$\hspace{0.5cm}(f,g)=\int_0^L f(x) g(x) dx.$$
Integrating over the interval $(0,L)$ and using  an integration by parts, we find
\begin{equation} \label{3}
( u_t ,\textbf{u})=- ( \varphi_{x}, \textbf{u}_x)+(\psi_{x}, \textbf{u}),\hspace{0.6cm} (x,t) \in (0,L)\times \mathbb{R}_+,
 \end{equation}
    \begin{equation} \label{4}
( v_t ,\textbf{v})=-(\psi_{x},\textbf{v}_x)-(\varphi_{x},\textbf{v})-(\psi, \textbf{v}),\hspace{0.2cm}(x,t) \in (0,L)\times \mathbb{R}_+.
       \end{equation}
  Two sets of test functions are required to incorporate the boundary conditions into (\ref{3})-(\ref{4}):
      $$H_1(0,1)=\lbrace \textbf{u} \in \mathcal{C}(0,1): \textbf{u} (0)= \textbf{u}(1)=0 \rbrace.$$

By adding equations (\ref{3}) and (\ref{4}) we end up with the variational form of the problem which can be formulated,   in the product space $H_1(0,1)\times H_1(0,1)$, as follows.\\
 Find $u$, $v$ such that for all $t>0$,  $\textbf{u}\in H_1(0,1)$ and
  $ \textbf{v}\in H_1(0,1)$, we have
      \begin{equation}\label{fv}
     ( u_t ,\textbf{u})+( v_t ,\textbf{v})=(\psi_{x},\textbf{u}) -(\psi_{x},\textbf{v}_x)-(\varphi_{x},\textbf{v})-(\psi, \textbf{v})- ( \varphi_{x},\textbf{u}_x).
     \end{equation}
  Let $N_x \in \mathbb{N}$ and  $h = 1/(N_x+1)$ such that the mesh $x_i = ih$,  $i =\lbrace 0, \cdots ,N_x+1\rbrace $ is a uniform partition of $[0, 1]$. \\
Now, to obtain the  Galerkin approximation of the variational problem (\ref{fv}), let us consider a finite dimensional set of functions $\lbrace w_1, \ldots, w_{N_x} \rbrace $, where $(w_i)_{i=1,\ldots, N_x}$ are the linear hat-functions with the property that $w_i(x)$ is a piecewise-linear function with $w_i(x_j)=\delta_{i,j}$ and $\delta_{i,j}$ is the  Kronecker delta function. Namely, we have

\begin{center}
\begin{tabular}{rl}
$\delta_{i,j}=$ & $ \left \{ \begin{array}{lrl}
1; \hspace{1cm} i=j,\\
0; \hspace{1cm} i\neq j,
          \end{array}\right.$ \\ \\
$w_i(x) = $ & $\left \{ \begin{array}{ll}
\displaystyle{ \frac{x-x_{i-1}}{h} }& \quad \forall \ x_{i-1}\leq x \leq x_i,\\
\displaystyle{\frac{x_{i+1}-x}{h} }& \quad \forall \ x_i\leq x \leq x_{i+1},\\
0 &\quad \forall \  x \in [0,1] \setminus [x_{i-1},x_{i+1}].
          \end{array}\right.$  \\
\end{tabular}
\end{center}

Now, we formulate the  semi-discrete problem as follows:\\
Find the functions $u_h$, $v_h$, $\varphi_h$ and $\psi_h$  such that
 $$u_h=\sum_{i=1}^{N_x} u_i(t) w_i(x),$$
$$\varphi_h=\sum_{i=1}^{N_x} \varphi_i(t) w_i(x),$$
$$\psi_h=\sum_{i=1}^{N_x} \psi_i(t) w_i(x),$$
 \hspace{0.2cm} and  $$v_h=\sum_{i=1}^{N_x} v_i(t)w_i(x).$$
 The discrete  boundary conditions read
      $$
     \varphi(x_{N_x},t)= \varphi(x_0,t)=0.
     $$
   $$\psi(x_{N_x},t)= \psi(x_0,t)=0.$$
Then, for
$$U(t)=\left[ u_h(x_0,t), \ldots, u_{h}(x_{N_x},t)\right]^t,$$
$$V(t)=\left[ v_h(x_0,t), \ldots, v_h(x_{N_x},t)\right]^t,$$
$$\Phi(t)=\left[ \varphi_h(x_0,t), \ldots, \varphi_{h}(x_{N_x},t)\right]^t,$$
and
$$\Psi(t)=\left[ \psi_h(x_0,t), \ldots, \psi_h(x_{N_x},t)\right]^t,$$
we have the following matrix formulation for the semi-discrete problem:
\begin{equation} \label{50000}
   \left \{ \begin{array}{lrl}
M \displaystyle{ \frac{d U}{dt} }= -K\Psi+S\Psi,\vspace{0.2cm} \\
M \displaystyle{\frac{d V}{dt}} =-K\Psi- S\Phi-M\Psi,\vspace{0.2cm} \\ 
\displaystyle{\frac{d \Phi}{dt}}=U(t),\vspace{0.2cm} \\
\displaystyle{\frac{d \Psi}{dt}}=V(t),
 \end{array}\right.
 \end{equation}
  where $M_{i,j}=(w_i,w_j)$ is the mass matrix, $K_{i,j}=(w_i',w_j')$ is the rigidity matrix and  $S$ is the matrix defined by  $S_{i,j}=(w_i',w_j)$. Note that $S$ is not symmetric.  \\
\subsection{Fully-discrete scheme in Finite Differences}

   We design an explicit unconditionally stable scheme using finite differences and  we consider  the classical  method of advancing the solution in time known as the leapfrog scheme. For the sake of completeness, we recall here the definition of this method.
   \begin{definition}(The leapfrog time scheme \cite[p.339]{D})\\
 Let $U$ denote a typical dependent variable, governed by an
equation of the form
\begin{equation}\label{abstract}
\frac{d U}{dt}= F(U).
\end{equation}
The continuous time domain $(0,T)$ is replaced by a sequence of
discrete moments $\lbrace 0,\Delta t, 2 \Delta t, \ldots, n \Delta t, \ldots \rbrace.$\\
The solution at these moments is denoted by $U^n = U(n \Delta t)$.
If this solution is known up to time $t = n\Delta t$, then the right-hand side of \eqref{abstract}, $F^n = F(U^n)$,  can be  computed.\\
The time derivative is approximated by a centered difference
derivative is approximated by a centered difference
$$ \frac{U^{n+1}-U^{n-1}}{2\Delta t}= F^n.$$
Thus, the forecast value $U^{n+1}$ may be computed from the
old value $U^{n-1}$ and the tendency $F^n$:
\begin{equation}\label{eq14}
U^{n+1} = U^{n-1} + 2\Delta t F^n.
\end{equation}
\end{definition}
    \begin{remark}
  For $n=0$, the equation \eqref{eq14} gives $$U^1=U^{-1}+2\Delta t F^0,$$  then, the initial condition $U^1$ cannot be obtained using the leapfrog scheme, so normally a simple non-centered step
  $$U^1=U^0+\Delta t F^0,$$  is used to provide the value at $t=\Delta t.$
   \end{remark}

    For that  purpose we introduce a time step $\Delta t > 0$ and we set $t^n = n \Delta t$.  Then, we introduce $U^n=U(n \Delta t)$ the discrete solution of the semi-discrete equations (\ref{50000}).
Our aim consists in finding the discrete solutions $(\Phi^n ,U^n ,\Psi^n ,V^n)$ which satisfy the following leapfrog  scheme:

\begin{equation} \label{Ms}
   \left \{ \begin{array}{lrl}
M \displaystyle{ \frac{U^{n+1}-U^{n-1}}{2\Delta t}} = -K\Phi^n+S\Psi^n,\\ \\
M \displaystyle{\frac{ V^{n+1}-V^{n-1}}{2\Delta t}} =-K\Psi^n- S\Phi^n-M\Psi^n,
 \end{array}\right.
 \end{equation}

 \begin{equation} \label{15}
   \left \{ \begin{array}{lrl}
   \displaystyle{\frac{\Phi^{n+1}-\Phi^{n-1}}{2\Delta t}} = U^n,\\ \\
 \displaystyle{\frac{ \Psi^{n+1}-\Psi^{n-1}}{2\Delta t}} =V^n.
  \end{array}\right.
 \end{equation}
The initial conditions  are simply obtained as follows:
 \begin{equation} \label{ci}
   \left \{ \begin{array}{lrl}
 V^0= \displaystyle{\frac{ \Psi^{1}-\Psi^{0}}{\Delta t}},\\ \\
 U^0  = \displaystyle{ \frac{\Phi^{1}-\Phi^{0}}{\Delta t}},
 \end{array}\right.
 \end{equation}
 and
 \begin{equation} \label{ci1}
   \left \{ \begin{array}{lrl}
   \Phi^1=\Delta t U^0 + \Phi^0,\\
\Psi^1=\Delta t V^0+\Psi^0,\\
U^1=U^0- \Delta t \left( M^{-1}K \Phi^0+M^{-1} S \Psi^0\right),\\
V^1=V^0 -\Delta t M^{-1} \left( K \Psi^0- \Delta t   M^{-1}S  \Phi^0- \Delta t \ M \Psi^0\right).
 \end{array}\right.
 \end{equation}
 Finally, we rewrite \eqref{Ms}-\eqref{15} as follows:
 \begin{equation}\label{nn}
  \left \{ \begin{array}{lrl}
   U^{n+1}= U^{n-1}-2\Delta t  \ \left( - M^{-1} K \Phi^{n}-  M^{-1} S \Psi^{n}  \right),\\
   V^{n+1}=V^{n-1}-2\Delta t \ \left(  M^{-1}K \Psi^{n} - M^{-1} S \Phi^n - I\Psi^n\right),\\
   \Phi^{n+1}=2\Delta t \  U^n +\Phi^{n-1},\\
   \Psi^{n+1}=2\Delta t \  V^n+\Psi^{n-1}.
 \end{array}\right.
 \end{equation}

%
\begin{remark}
We notice here that  the conservation or  the dissipation  of the discrete energy, as we will see later on in this article, gives a good indication on the stability of  the  proposed fully-discrete scheme for the one-dimensional Timoshenko system under consideration. A rigorous  convergence result for this scheme together with the stability and consistency will be studied in \cite{CH-prep}.

The proof of the convergence results will be based, in part,  on a similar proof by Cowsar et al. \cite{converpp} for a mixed method approximation applied to the wave equation.
\end{remark}

\subsection{Numerical tests}
In order to show the behavior of the numerical solution, we start by choosing the appropriate initial conditions which satisfy the Dirichlet boundary conditions (\ref{2}) and constitute a paired solution of  the undamped Timoshenko system  (\ref{22}).  More precisely, we set
$$\varphi_0(x_i)=cos(\frac{2\pi x_i}{L}),$$
$$\psi_0(x_i)=sin( \frac{ 2\pi x_i}{L});$$
see Fig. \ref{fig:1} for the behavior of the initial data and Fig. \ref{fig:2222} for the variation of the Timoshenko solution in space and time up to time $T=10$.


\begin{figure}
\includegraphics[width=4cm]{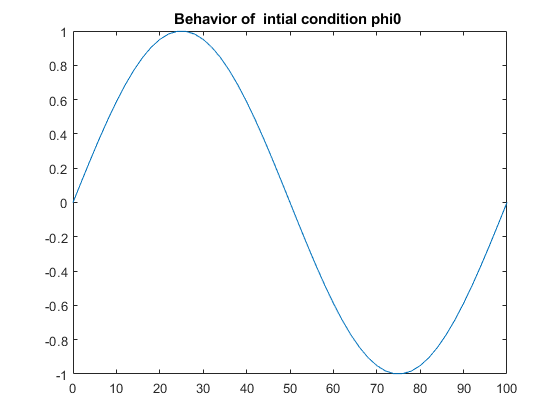}
\includegraphics[width=4cm]{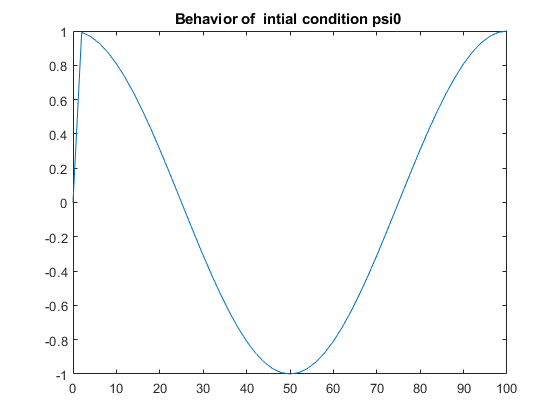}
\centering
\caption{The  initial conditions $\varphi_0$ and $\psi_0$.}
\label{fig:1}
\end{figure}

\begin{figure}
\includegraphics[width=5cm]{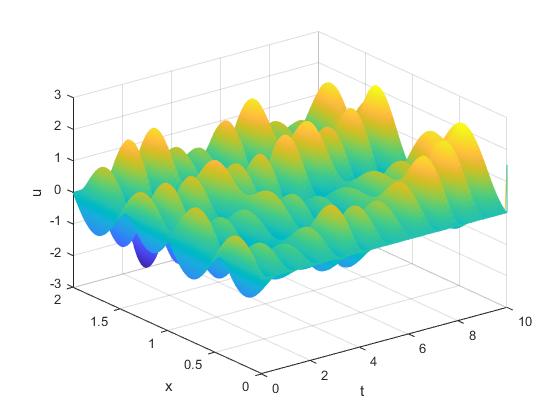}
\includegraphics[width=5cm]{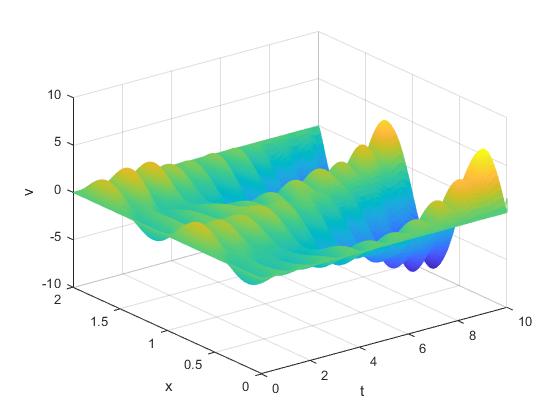}
\centering
\caption{The numerical behavior of the solutions $u(x_i,t_n)$, $v(x_i,t_n)$.  }
\label{fig:2222}
\end{figure}
%

\textbf{Comment 1.}  The numerical behavior of the solution of  (\ref{22}),  for  $N_x=50,$ $L=2$, $T=10$, $h=0.04$ and $\Delta t= c*h$ where  $c$ is a  positive constant, is obtained  in Fig. \ref{fig:2222}. We note that the study of the undamped case is just a first step in our numerical approach but it will allow us to reach later on the understanding of the damped case.  Thus, it would be interesting, besides the importance of the intrinsic study of the free wave equation, to compare and show the difference between the undamped case and the damped one.  \\

  As we know, the behavior of the solution influences the properties of the total energy $E(t)$ of the system, and from the figures above we can see that the  discrete  Timoshenko system  is purely conservative. This observation is substantiated by the next result as in Proposition \ref{prop1} below.\\

      Using the numerical scheme as previously described,   we present in the following proposition the first property related to the discrete energy of the system (\ref{002})-(\ref{122}).
\begin{proposition}(Conservation property of the discrete energy) \label{prop1}\\
Let $h>0 $ , $\Delta t>0 $  and   $(\Phi^n , U^n,\Psi^n , V^n)$ be the  solution of the  finite difference
scheme (\ref{Ms})-\eqref{15} associated with  initial conditions (\ref{ci})-(\ref{ci1}).\\
 Then, for all $n\in \lbrace 0, \ldots, N_t\rbrace,$ the discrete energy, defined by
\begin{equation}\label{ener}
E^n:= \frac{1}{2}\left(\| U^{n}\|_M^2+\|V^{n}\|_M^2+\|\Phi^{n}\|_K^2+\|\Psi^{n}\|_K^2 +\|\Psi^n\|_M^2\right),\ \ \forall\  n=\lbrace 0, \cdots, Nt\rbrace,
\end{equation}
 satisfies the following conservation property:
\begin{equation}\label{ene1}
\frac{1}{2\Delta t} \left( E^{n+1}-E^{n-1}\right)=0,
\end{equation}
here $\|.\|_M$ denotes the norm $\|u\|^2_M=(Mu,u).$
\end{proposition}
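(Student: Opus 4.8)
The plan is to run the classical discrete energy argument, transporting to the fully discrete level the ``multiply by the velocity and integrate'' computation that gives $dE/dt=0$ in the continuous (semi-discrete) setting. The algebraic engine is the factorization
\[
\|a\|_B^2-\|b\|_B^2=(B(a-b),\,a+b),
\]
valid for any symmetric matrix $B$, where $(\cdot,\cdot)$ denotes the Euclidean inner product on $\mathbb{R}^{N_x}$ and $\|u\|_B^2=(Bu,u)$. Since $M$ and $K$ are symmetric, I would apply this identity to each of the five terms of $E^{n+1}-E^{n-1}$, producing factors such as $(M(U^{n+1}-U^{n-1}),\,U^{n+1}+U^{n-1})$ and $(K(\Phi^{n+1}-\Phi^{n-1}),\,\Phi^{n+1}+\Phi^{n-1})$.

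Next I substitute the scheme (\ref{Ms})--(\ref{15}), namely $M(U^{n+1}-U^{n-1})=2\Delta t(-K\Phi^n+S\Psi^n)$, $M(V^{n+1}-V^{n-1})=2\Delta t(-K\Psi^n-S\Phi^n-M\Psi^n)$, together with $\Phi^{n+1}-\Phi^{n-1}=2\Delta t\,U^n$ and $\Psi^{n+1}-\Psi^{n-1}=2\Delta t\,V^n$, into the first slot of each inner product. This reduces $\tfrac{1}{2\Delta t}(E^{n+1}-E^{n-1})$ to a sum of inner products pairing the right-hand sides of the scheme (all at level $n$) against the sums $U^{n+1}+U^{n-1}$, $V^{n+1}+V^{n-1}$, $\Phi^{n+1}+\Phi^{n-1}$ and $\Psi^{n+1}+\Psi^{n-1}$.

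The cancellations I would look for mirror the continuous computation. The symmetry $K^\top=K$ should make the term $(-K\Phi^n,\cdot)$ coming from the $U$-equation cancel the term $(KU^n,\cdot)$ coming from the $\Phi$-factor, and similarly for the pairs $(-K\Psi^n,\cdot)/(KV^n,\cdot)$ and $(-M\Psi^n,\cdot)/(MV^n,\cdot)$; this is the discrete image of the integration-by-parts balance between the kinetic and potential parts of $E$. The genuinely coupling contributions are those carrying $S$, and here the decisive structural fact is the skew-symmetry $S^\top=-S$, which follows from $S_{i,j}=(w_i',w_j)=-(w_i,w_j')=-S_{j,i}$ after one integration by parts using the Dirichlet conditions $w_i(0)=w_i(1)=0$: the term $(S\Psi^n,\cdot)$ generated by the $U$-equation is meant to be annihilated by $-(S\Phi^n,\cdot)$ generated by the $V$-equation.

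The step I expect to be the main obstacle is reconciling these cancellations with the leapfrog time-staggering. Unlike the semi-discrete identity $dE/dt=0$, each inner product here pairs a level-$n$ quantity against the \emph{sum} of levels $n\pm1$, so the symmetric/skew-symmetric bookkeeping must be carried out while keeping track of the time indices. Concretely, after substitution one is confronted with a residual of the form $(S\Psi^n,\,U^{n+1}+U^{n-1})-(S\Phi^n,\,V^{n+1}+V^{n-1})$, together with the analogous $K$- and $M$-cross terms; the crux is to show these combine to zero by re-pairing $U^{n\pm1}$ with $\Phi^{n\pm1}$ and $V^{n\pm1}$ with $\Psi^{n\pm1}$ so that $S^\top=-S$, $K^\top=K$ and $M^\top=M$ act on matched time levels and the sum telescopes. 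If a genuine leftover survives this matching, it would have to be reabsorbed by augmenting the energy with a cross term of the type $(\Phi^n,\,S\Psi^n)$; pinning down the correct such modification is the delicate point, and it is there that I would concentrate the verification rather than on any analytic estimate.
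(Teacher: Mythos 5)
Your multiplier strategy is exactly the paper's: multiply $(\ref{Ms})_{1,2}$ by $U^{n+1}+U^{n-1}$ and $V^{n+1}+V^{n-1}$, multiply $(\ref{15})_{1,2}$ by $K(\Phi^{n+1}+\Phi^{n-1})$ and $K(\Psi^{n+1}+\Psi^{n-1})$, and exploit $\|a\|_B^2-\|b\|_B^2=(B(a-b),a+b)$ for symmetric $B$. But your proposal stops precisely at the step you yourself flag as the crux, and that question resolves \emph{against} a pointwise cancellation. Carrying out the substitution gives
\begin{equation*}
\frac{E^{n+1}-E^{n-1}}{2\Delta t}=\frac{1}{2}\Bigl[(G_{n-1}-G_n)+(H_{n-1}-H_n)+(I_{n-1}-I_n)
+(S\Psi^n,U^{n+1}+U^{n-1})-(S\Phi^n,V^{n+1}+V^{n-1})\Bigr],
\end{equation*}
where $G_l=(K\Phi^l,U^{l+1})-(K\Phi^{l+1},U^l)$, $H_l=(K\Psi^l,V^{l+1})-(K\Psi^{l+1},V^l)$ and $I_l=(M\Psi^l,V^{l+1})-(M\Psi^{l+1},V^l)$. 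The three bracketed differences telescope under summation over $l$ but are \emph{not} zero at a fixed $n$; likewise, using $U^{n+1}+U^{n-1}=(\Phi^{n+2}-\Phi^{n-2})/(2\Delta t)$, $V^{n+1}+V^{n-1}=(\Psi^{n+2}-\Psi^{n-2})/(2\Delta t)$ and $S^{\top}=-S$, the $S$-block becomes $(P_n-P_{n-2})/(2\Delta t)$ with $P_l=(S\Psi^l,\Phi^{l+2})+(S\Psi^{l+2},\Phi^l)$: again telescoping, again not vanishing. So no amount of symmetric/skew-symmetric re-pairing yields the exact identity $E^{n+1}=E^{n-1}$; what the algebra conserves exactly is a \emph{modified} (staggered) energy containing adjacent-level cross terms --- precisely the augmentation you said would be needed ``if a genuine leftover survives,'' but which you never pin down. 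As written, your proposal is a correct diagnosis of the difficulty, not a proof of the statement.

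It is worth knowing how the paper itself handles this, because it faces the identical residuals. It sums the multiplied identities over $l=0,\ldots,n$ so that the $G$, $H$, $I$ blocks telescope to boundary terms, and then disposes of what remains by invoking $U^{n+1}=U^{n-1}+\varepsilon(\Delta t)$, $V^{n+1}=V^{n-1}+\varepsilon(\Delta t)$ (equations (\ref{b1})--(\ref{b2})) and $(S\Psi^{n+1},\Phi^{n+1})-(S\Psi^{n-1},\Phi^{n-1})=\varepsilon(\Delta t)$ (equation (\ref{02})), where $\varepsilon(\Delta t)\to 0$ as $\Delta t\to 0$. In other words, the paper's argument establishes conservation only up to remainders that vanish with the time step, not the exact equality (\ref{ene1}) as stated; this is also consistent with the small nonzero fluctuation visible in its own numerical test. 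So your instinct was right on both counts: the leftover is real, and the clean resolution is to prove exact conservation of the cross-term-corrected energy (or else to state and prove the result asymptotically in $\Delta t$). A complete write-up must do one of those two things explicitly.
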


\begin{proof}
 To prove (\ref{ene1}) we use the energy method
in the following manner. We multiply $(\ref{Ms})_{1,2}$ by  the
discrete multipliers  $(U^{n+1}+U^{n-1})$, $(V^{n+1}+V^{n-1})$,
 we get  \\
 \begin{eqnarray}\label{aa2}
\frac{1}{2\Delta t}\left[\|U^{n+1}\|^2-\|U^{n-1}\|^2\right]&&=-(K\Phi^n,U^{n+1})-(K\Phi^n,U^{n-1})\nonumber \\
 &&+(S\Psi^n,U^{n+1})+(S\Psi^n,U^{n-1}),
\end{eqnarray}
 and
 \begin{eqnarray}\label{a2}
  \frac{1}{2\Delta t}\left[\|	V^{n+1}\|^2-\|V^{n-1}\|^2\right]&&=-(K\Psi^n,U^{n+1})-(S\Phi^n,V^{n-1})\nonumber \\
  &&+(S\Phi^n,V^{n+1})-(K\Psi^n,V^{n-1})\nonumber \\  &&-(S\Phi^n,V^{n-1})-(M\Psi^n ,V^{n+1}+V^{n-1}),
 \end{eqnarray}

 Second, we multiply the equations  $(\ref{15})_{1,2}$ by
$ K(\Phi^{n+1}+\Phi^{n-1})$,\\ $K(\Psi^{n+1}+\Psi^{n-1})$, respectively, we obtain
  \begin{equation}\label{a3}
  \frac{1}{2\Delta t}\left[\|\Phi^{n+1}\|_K^2-\|\Phi^{n-1}\|_K^2\right]=(U^{n},K\Phi^{n+1})+(U^n,K\Phi^{n-1})
 \end{equation}
 and
 \begin{equation}\label{a4}
  \frac{1}{2\Delta t}\left[\|\Psi^{n+1}\|_K^2-\|\Psi^{n-1}\|_K^2\right]=(V^{n},K\Psi^{n+1})+(V^n,K\Psi^{n-1})
 \end{equation}

 Now, summing the equations (\ref{aa2}),(\ref{a2}), (\ref{a3}) and (\ref{a4}) and by taking the sum over $l= \lbrace 0,\cdots,n \rbrace,$  we observe that
\begin{eqnarray}\label{a5}
&&\sum_{l=0}^n (V^{l},K\Psi^{l+1})+(V^l,K\Psi^{l-1})+(U^{l},K\Phi^{l+1}) +(U^l,K\Phi^{l-1})-(K\Psi^l,U^{l+1})\nonumber \\ &&-(K\Phi^l,U^{l-1}) -(K\Psi^l,V^{l-1})-(K\Psi^l,U^{l+1})=-(KU^n,\Phi^{n-1})+(KU^{n+1},\Phi^n)
\nonumber \\ &&-(KV^{n+1},\Psi^n)+(KV^{n+1},\Psi^{n-1}).
\end{eqnarray}

 Using (\ref{nn}), we have
\begin{equation}\label{b1}
  U^{n+1}= U^{n-1}+\varepsilon(\Delta t)
\end{equation}
\begin{equation}\label{b2}
 V^{n+1}= V^{n-1}+\varepsilon(\Delta t)
\end{equation}
where, $\varepsilon (\Delta t)\rightarrow 0$ when $ \Delta t\rightarrow 0$.\\

 	Taking in to account the equalities  (\ref{15}) (\ref{b1}) and (\ref{b2}), we deduce that the equality (\ref{a5}) is vanish for any $n\in \lbrace 0,...,Nt \rbrace.$\\
 	On the other hand, using (\ref{15})$$ \displaystyle{\frac{\Phi^{n+1}-\Phi^{n-1}}{2\Delta t}} = U^{n},$$
  we have\begin{equation}
(S\Psi^n,U^{n+1})+(\Phi^n,SV^{n+1})\simeq \frac{(S\Psi^{n+1},\Phi^{n+1})-(S\Psi^{n-1},\Phi^{n-1})}{2\Delta t}.
\end{equation}

Using  (\ref{15}), the expressions of $\Psi^{n+1}$ and $\Phi^{n+1}$ can be written as follows:
\begin{displaymath}
\Psi^{n+1}=2\Delta t V^n + \Psi^{n-1},
\end{displaymath}
\begin{displaymath}
\Phi^{n+1}=2\Delta t U^n +\Phi^{n-1},
\end{displaymath}
and consequently we have
\begin{equation} \label{02}
\begin{tabular}{c}
$ 	 (S\Psi^{n+1},\Phi^{n+1})-(S\Psi^{n-1},\Phi^{n-1}) = (2\Delta t \  SV^{n},\Phi^{n-1})
 	$ \\
 	 $+(S\Psi^{n-1},2\Delta t \ U^n)+ 4(\Delta t)^2(SV^n,U^n)=\varepsilon(\Delta t) \ \rightarrow \  0 \  \textnormal{as} \ \Delta t \rightarrow 0.$
 	\end{tabular}
 	\end{equation}
Using the expression of the discrete energy (\ref{ener}), we obtain the  conservation result (\ref{ener}). This ends the proof of Proposition \ref{prop1}.
\end{proof}

   \begin{figure}
\centering
\includegraphics[width=7cm]{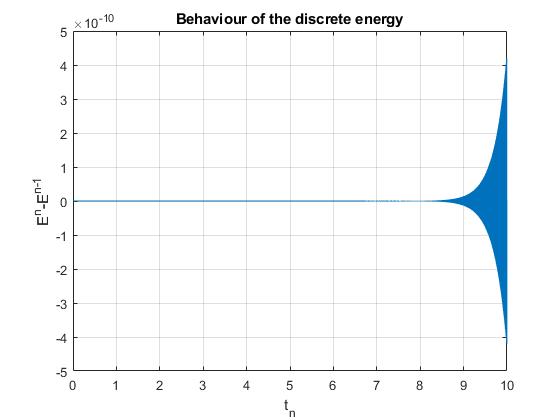}
\caption{The undamped case: the conservative property of the discrete energy $E^n$ (defined by (\ref{ener})).}
\label{fig:grosblabla}
\end{figure}

\textbf{Comment 2.} We observe that the energy difference $E^n-E^{n-1}$ is equal to zero  at almost every  time $t_n$. However, it is obvious  that Fig. \ref{fig:grosblabla} presents a peak of $5.10^{-5}$, but this peak does not have any impact on   the conservation result of energy since it is a digital zero due to the computational error. In conclusion, Figure \ref{fig:grosblabla} shows  the conservative character of the discrete  energy $E^n$ which is in agreement with the theoretical results.

\section{Exponential decay rate of the discrete energy }
\label{sec:2}
It is well-known that the energy associated with  the system (\ref{1}) can not be, in general, exponentially stable in case where we consider one damping term of the form $\psi_t$ in the second equation of (\ref{1}); see  \cite{13}. This exponential stability is only obtained in the case of equal-wave speeds  $\left( \frac{k}{\rho _{1}}=\frac{b}{\rho _{2}}\right)$. In this section,  we will confirm numerically this theoretical result. More precisely, we will prove that the damped Timoshenko system with a linear damping term considered in one equation  stumbles the exponential stability. Let us consider the following linearly damped Timoshenko system:
\begin{equation} \label{100}
     \left \{ \begin{array}{lrl}
  \varphi_{tt} -(\varphi_x+ \psi)_x=0,  \hspace{2.4cm } (x,t) \in (0,L)\times \mathbb{R}_+,\\
         \psi_{tt}-\psi_{xx}+(\varphi_x+\psi)+ \mu \psi_t=0, \hspace{0.5cm }(x,t) \in (0,L)\times \mathbb{R}_+,
          \end{array}\right.
       \end{equation}
 where  $\mu$ represents the damping coefficient. \\

        First, we will design a numerical simulation with finite element methods taking advantage of the discrete formulation carried out for the undamped case (\ref{1}).

      As we did for the undamped system (\ref{1}), we present here a matrix formulation for the semi-discrete linearly damped problem corresponding to \eqref{100} which reads as follows:
\begin{equation} \label{5555}
   \left \{ \begin{array}{lrl}
M \displaystyle{\frac{d U}{dt}} = -K\Psi+S\Psi,\vspace{.2cm} \\
M \displaystyle{\frac{d V}{dt} }=-K\Psi- S\Phi-M\Psi-\mu M V,\vspace{.2cm}  \\ 
\displaystyle{\frac{d \Phi}{dt}}=U(t),\vspace{.2cm}  \\
\displaystyle{\frac{d \Psi}{dt}}=V(t).
 \end{array}\right.
 \end{equation}
 Second, we consider the finite difference scheme applied to  (\ref{5555}) and we end up with  the following formulation. \\
 Find $(U^n,V^n,\Phi^n,\Psi^n)$ such that
    \begin{equation} \label{11115}
   \left \{ \begin{array}{lrl}
M\displaystyle{ \frac{U^{n+1}-U^{n-1}}{2\Delta t}} = -K\Phi^n+S\Psi^n,\vspace{.2cm} \\
M \displaystyle{ \frac{ V^{n+1}-V^{n-1}}{2\Delta t}} =-K\Psi^n-S\Phi^n-M\Psi^n-\mu M V^n,\vspace{.2cm} \\
   \displaystyle{\frac{\Phi^{n+1}-\Phi^{n-1}}{2\Delta t}} = U^n,\vspace{.2cm} \\
 \displaystyle{\frac{ \Psi^{n+1}-\Psi^{n-1}}{2\Delta t}} =V^n.
 \end{array}\right.
 \end{equation}
 Using the same arguments as in the previous section, we have the following result about the variation of the discrete energy (\ref{ener}).
 \begin{theorem}\label{th1}
Let $\Delta t>0$. Then, the discrete energy $E^{n+1}$,  associated with the solutions of the discrete equations (\ref{11115}) with the initial conditions (\ref{ci})-(\ref{ci1}) and the boundary conditions (\ref{BC}) and  defined by (\ref{ener}), verifies
\begin{equation}
\frac{1}{2\Delta t} \left( E^{n+1}-E^{n-1}\right)= -\mu  \|V^n\|^2_M \leq 0.
\end{equation}
Consequently, we obtain the energy dissipation law which reads as follows:
\begin{equation}\label{dissip1}
E^{n} \leq E^0,\;   \forall  \; n\in {0,\cdots, Nt}
\end{equation}
 \end{theorem}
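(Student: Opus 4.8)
The plan is to repeat, essentially verbatim, the energy computation carried out for Proposition~\ref{prop1}, the only new ingredient being the damping term $-\mu M V^n$ appearing in the second line of \eqref{11115}. Concretely, I would multiply the first two lines of \eqref{11115} by the discrete multipliers $(U^{n+1}+U^{n-1})$ and $(V^{n+1}+V^{n-1})$ respectively, and the last two lines by $K(\Phi^{n+1}+\Phi^{n-1})$ and $K(\Psi^{n+1}+\Psi^{n-1})$, together with an additional multiplication of the $\Psi$-equation by $M(\Psi^{n+1}+\Psi^{n-1})$ in order to recover the $\|\Psi^n\|_M^2$ contribution to the energy \eqref{ener}. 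The elementary identity $(M(a-b),a+b)=\|a\|_M^2-\|b\|_M^2$, together with its $K$-analogue, turns each left-hand side into a telescoping difference of squared norms.

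Summing the resulting identities, the left-hand side collapses: the five norm-differences, each divided by $2\Delta t$, reproduce after the global factor $\tfrac12$ in \eqref{ener} exactly twice the increment of the energy, so the total left-hand side equals $\frac{1}{\Delta t}\left(E^{n+1}-E^{n-1}\right)$. On the right-hand side all the conservative coupling terms cancel precisely as in Proposition~\ref{prop1}: the relations \eqref{15}, combined with the observations \eqref{b1}--\eqref{b2} that $U^{n+1}=U^{n-1}+\varepsilon(\Delta t)$ and $V^{n+1}=V^{n-1}+\varepsilon(\Delta t)$, make the analogue of \eqref{a5} vanish up to $\varepsilon(\Delta t)$, while the mixed $S$-terms cancel exactly as in \eqref{02}.

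The single surviving contribution is the damping term $-\mu\,(MV^n,\,V^{n+1}+V^{n-1})$. Using the centered relation $V^{n+1}+V^{n-1}=2V^n+\varepsilon(\Delta t)$, I would rewrite this as $-2\mu\|V^n\|_M^2+\varepsilon(\Delta t)$. Dividing the whole identity by $2$ then yields
\begin{equation}
\frac{1}{2\Delta t}\left(E^{n+1}-E^{n-1}\right)=-\mu\|V^n\|_M^2\le 0,
\end{equation}
which is the asserted variation law; the sign is guaranteed because $M$ is symmetric positive definite, so that $\|V^n\|_M^2\ge 0$.

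To deduce the dissipation law \eqref{dissip1}, I would read $E^{n+1}\le E^{n-1}$ as monotonicity separately along even and odd indices, giving $E^{2k}\le E^0$ and $E^{2k+1}\le E^1$; a short separate check on the starter values \eqref{ci}--\eqref{ci1} then provides $E^1\le E^0$, so that $E^n\le E^0$ for every $n$. The main obstacle is not any single estimate but the clean bookkeeping of the several $\varepsilon(\Delta t)$ remainders produced by substituting $U^{n+1},V^{n+1},\Psi^{n+1}$ by their $(n-1)$- or $2n$-counterparts; I would track them exactly as in Proposition~\ref{prop1} and argue that they are absorbed into the conservative cancellation, leaving only the damping contribution $-\mu\|V^n\|_M^2$.
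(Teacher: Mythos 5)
Your proposal follows essentially the same route as the paper's proof: the same discrete multipliers $(U^{n+1}+U^{n-1})$, $(V^{n+1}+V^{n-1})$, $K(\Phi^{n+1}+\Phi^{n-1})$, $K(\Psi^{n+1}+\Psi^{n-1})$, the same cancellation of the conservative coupling terms borrowed from Proposition~\ref{prop1} (with the same $\varepsilon(\Delta t)$ bookkeeping), and the same extraction of the surviving damping contribution $-\mu\,(MV^n,V^{n+1}+V^{n-1})\approx-2\mu\|V^n\|_M^2$, your multiplication of the $\Psi$-equation by $M(\Psi^{n+1}+\Psi^{n-1})$ being just an equivalent way of producing the $\|\Psi^n\|_M^2$ contribution that the paper keeps as the cross term $(M\Psi^n,V^{n+1}+V^{n-1})$. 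If anything, your final step is more careful than the paper's, which declares \eqref{dissip1} ``straightforward'': the even/odd monotonicity plus the separate check $E^1\le E^0$ from \eqref{ci}--\eqref{ci1} is precisely what is needed to pass from $E^{n+1}\le E^{n-1}$ to $E^n\le E^0$.
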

 \begin{proof}
 Similarly to the continuous case, we use the techniques of multipliers at a discrete level given by $(U^{n+1}+U^{n-1})$, $(V^{n+1}+V^{n-1})$,   $ K(\Phi^{n+1}+\Phi^{n-1})$ and $K(\Psi^{n+1}+\Psi^{n-1})$  and we organize the results in order to obtain the difference $E^{n+1} - E^n$ as follows:
\begin{equation}
\begin{array}{l}
 \displaystyle \frac{1}{2\Delta t} [ \|V^{n+1}\|^2_M+\|\Psi^{n+1}\|^2_K-\|V^{n-1}\|^2_M-\|\Psi^{n-1}\|^2_K  +\|U^{n+1}\|^2_M \vspace{.2cm}\\
 \displaystyle -\|U^{n-1}\|_M^2 + \|\Phi^{n+1}\|_K^2
-\|\Phi^{n-1}\|^2_K ]+(M\Psi^n,V^{n+1}+V^{n-1}) \vspace{.2cm}\\
 \displaystyle=(\Phi^n , SV^{n+1}-SV^n)-(K\Phi^n, U^{n+1}+U^{n-1})
+(S\Psi^n ,U^{n+1}- U^{n-1}) \vspace{.2cm}\\
 +(K\Phi^{n+1},U^n) +(K \Phi^{n-1},U^n)
 	-\mu (MV^n,V^{n+1}+V^{n-1}).
 	\end{array}
 \end{equation}
Now, we use  the discrete energy expression (\ref{ener}),  we take the sum over $l=  0,\ldots,n$, and we use the initial conditions (\ref{ci1}) and the boundary conditions (\ref{BC}), we deduce that
 $$ \sum_{l=0}^{n}\frac{E^{l+1}-E^{l-1}}{2\Delta t} = \frac{E^{n+1}-E^{n-1}}{2\Delta t} = -\mu (M V^n,  V^{n})=-\mu \|V^n\|_M^2,$$
and hence
 \begin{equation}                                         \frac{E^{n+1}-E^{n-1}}{2\Delta t}=-\mu \|V^{n}\|^2_M.
   \end{equation}
  Then \eqref{dissip1} is straightforward.
   This ends the proof of Theorem \ref{th1}.
 \end{proof}

 Here, we show the numerical experiments related to the linearly damped Timoshenko system \eqref{100}. We consider for this system the same speeds of wave propagation and we obtain the variation of the discrete energy characterized by the exponential decay rate in terms of time; see Fig. \ref{fig:kiii}.
   \begin{figure}[h!]
\centering
\includegraphics[width=5cm]{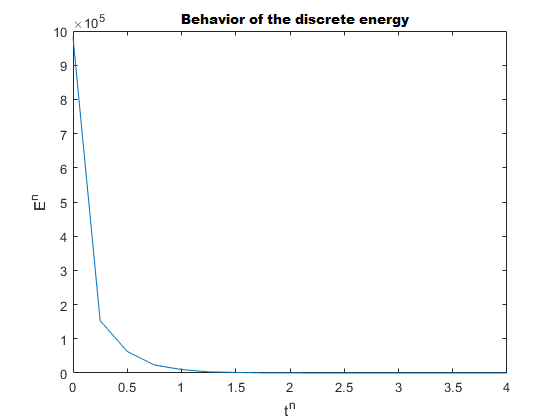}
\includegraphics[width=5cm]{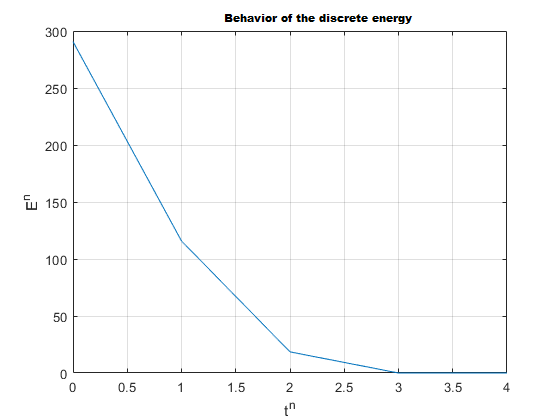}
\caption{The damped case: the discrete energy of
 (\ref{100}) expressed as a function of $t_n$ with two different initial data.}
 \label{fig:kiii}
\end{figure}

In Fig. \ref{fig:kiii},  the following initial data are taken
$$\varphi_0(x)=sin(N \pi x/L) \quad \text{and} \quad
\psi_0(x)=cos(N \pi x/L).$$
We also consider the following values for the  constants $L$, $T$, $h$  $c$, $N_x$ and $Nt$:
\begin{equation}\label{data}
L=50,
T=4,
c=0.2
N_x=10,
h= L/N_x,
k=c*h,
Nt=(N_x*T)/(c*L).
\end{equation}
  \begin{figure}[h!]
  \includegraphics[width=5cm]{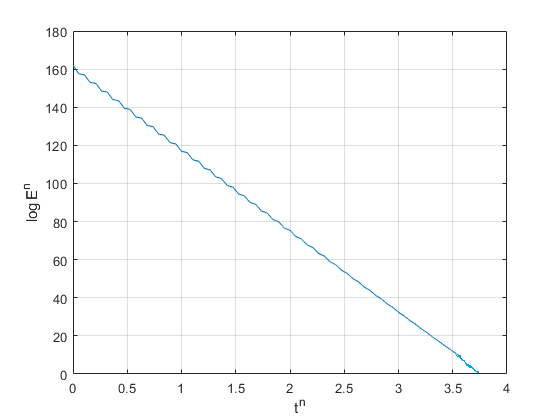}
\centering
\caption{Exponential decay rate of the discrete energy.}
\label{fig:kiki5}
\end{figure}

\textbf{Comment 4.} The energy $E^n$ decays like an exponential function $\exp(-\mu t^n)$ for $\mu >0$,  in the full damping case the discrete counterpart of the Timoshenko system is exponentially stable. \\
Fig. \ref{fig:kiki5} presents the graph of $log(E^n)$  as a function of $t^n$ which is in good agreement with the theoretical result. More precisely,  we have
 \begin{equation}
log(E^n)=-\mu t^n +b,  \hspace{0.2cm} -\mu \simeq -45.71, \hspace{0.2cm} b= 160,\hspace{1cm} \forall  \ n=1,\ldots,Nt,
\end{equation}
\begin{equation}
E^n=exp(b)\exp(-\mu t^n), \hspace{1cm} \forall \  n=1,\ldots,Nt.
\end{equation}
\section{Polynomial decay rate of the discrete energy}\label{sec:3}
\subsection{Nonlinear damping of type "$\vert s \vert s $"}\mbox{}\\
In this section, we deal with the following  nonlinear damped  Timoshenko system
 \begin{equation} \label{T1}
     \left \{ \begin{array}{ll}
  \varphi_{tt} - (\varphi_x+ \psi)_x=0, &  (x,t) \in (0,L)\times \mathbb{R}_+,\\
         \psi_{tt}-b\psi_{xx}+(\varphi_x+\psi)+  g(\psi_t)=0, & (x,t) \in (0,L)\times \mathbb{R}_+.
          \end{array}\right.
       \end{equation}

        The  description of the behavior of the energy corresponding to the solution of the system (\ref{T1}) is the goal of a great number of researchers. In \cite{ali}, the authors  consider a nonlinear vibrating Timoshenko system with thermoelasticity with second sound and used a fourth-order finite difference scheme to compute the numerical solutions.  Let us also recall that the question of stability  is strongly depending on the choice of the types of dissipation under consideration.\\

        In our case  we  first assume  that $g(\psi_t)=|\psi_t |\psi_t $, then in the second part of this section we will assume that $g(\psi_t)$ is given by $exp(\frac{-1}{(\psi_t)^2})$.
  In order to obtain the space discretization of the system (\ref{T1}), we use the Galerkin approximation as in the previous section. So  we first introduce the following new solutions:
   \begin{equation} \label{T3}
     \left \{ \begin{array}{lrl}
  \varphi_t=u,\\
  u_t=\varphi_{xx} -\psi_x,\\
  \psi_t=v,\\
  v_t=\psi_{xx}-\varphi_x-\psi-|\psi_t|\psi_t.
          \end{array}\right.
       \end{equation}
       Then, the system (\ref{T1}) can be rewritten as follows:
   \begin{equation}\label{25}
    \left \{ \begin{array}{lrl}
   M \displaystyle{\frac{du}{dt}}=-K\Phi +S\Psi,\vspace{.2cm} \\
   M\displaystyle{\frac{dv}{dt}}=-K\Psi-S\Phi-M\Psi - M |V|V,\vspace{.2cm} \\
   \displaystyle{\frac{d\Psi}{dt}}=V,\vspace{.2cm} \\
  \displaystyle{ \frac{d\Phi}{dt}}=U,
    \end{array}\right.
\end{equation}
where the term $"|V|V"$ reads
\begin{equation}
V(t)|V(t)|=\sum_{i=0}^{N_x} |v_i(t)| v_i(t) w_i(x).
\end{equation}
  The system (\ref{25}) can be written now as : find $(\Phi,U,\Psi,V)$ such that
 \begin{equation}
  \left \{ \begin{array}{lrl}
   M \displaystyle{\frac{dU}{dt}}=-K\Phi +S\Psi,\vspace{.2cm} \\
   M\displaystyle{\frac{dV}{dt}}=-K\Psi-S\Phi-M\Psi - M  V|V|,\vspace{.2cm} \\
   \displaystyle{\frac{d\Psi}{dt}}=V,\vspace{.2cm} \\
   \displaystyle{\frac{d\Phi}{dt}}=U.
    \end{array}\right.
 \end{equation}
 Now, we approximate $V^n$ by $\frac{1}{2}(V^{n+1}+V^{n-1})$, then the finite differences scheme yields the following problem.\\
 Find  $(\Phi^n,U^n,\Psi^n,V^n)$ such that
  \begin{equation}\label{44}
  \left \{ \begin{array}{ll}
   M \displaystyle{\frac{U^{n+1}-U^{n-1}}{2\Delta t}}=&-K\Phi^n  +S\Psi^n,\vspace{.2cm} \\
   M\displaystyle{ \frac{V^{n+1}-V^{n-1}}{2\Delta t}}=&-K\Psi^n-S\Phi^n-M\Psi^n \vspace{.2cm} \\
   &- \frac{1}{2} M  (V^{n+1}+V^{n-1})|V^n|,
      \end{array}\right.
 \end{equation}
 and
   \begin{equation}\label{233}
  \left \{ \begin{array}{lrl}
   \displaystyle{\frac{\Psi^{n+1}-\Psi^{n-1}}{2\Delta t}}=V^n,\vspace{.2cm} \\
  \displaystyle{ \frac{\Phi^{n+1}-\phi^{n-1}}{2 \Delta t}}=U^n.
    \end{array}\right.
 \end{equation}
  \begin{theorem}\label{th3}
   Let $\Delta t>0$. Then,  the energy $E^{n+1}$, defined by (\ref{ener}) and associated with the solutions of the discrete equations
\eqref{44}-\eqref{233} with the initial conditions (\ref{ci})-(\ref{ci1}) and the boundary conditions (\ref{BC}), satisfies the following conservation property:
\begin{equation}\label{enrg2}
\frac{1}{2\Delta t}  \left( E^{n+1}-E^{n-1}\right)= - \frac{1}{2} \|V^{n+1}+V^{n-1}\|_M^2 |V^n| \leq 0.
\end{equation}
Moreover, we have the discrete energy dissipation law,
$$ E^{n} \leq E^0.$$
  \end{theorem}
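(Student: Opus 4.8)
The plan is to carry over to the nonlinearly damped system the discrete energy method already employed for Proposition \ref{prop1} and Theorem \ref{th1}; the conservative part of \eqref{44}--\eqref{233} is identical to the one treated there, so the only genuinely new ingredient is the nonlinear feedback. Concretely, I would multiply the first and second equations of \eqref{44} by the discrete multipliers $(U^{n+1}+U^{n-1})$ and $(V^{n+1}+V^{n-1})$, and the two equations of \eqref{233} by $K(\Psi^{n+1}+\Psi^{n-1})$ and $K(\Phi^{n+1}+\Phi^{n-1})$, respectively. Using the symmetry of $M$ and $K$, each principal term telescopes, e.g. $(M\tfrac{U^{n+1}-U^{n-1}}{2\Delta t},U^{n+1}+U^{n-1})=\tfrac{1}{2\Delta t}(\|U^{n+1}\|_M^2-\|U^{n-1}\|_M^2)$, and analogously for the $V$, $\Phi$ and $\Psi$ contributions, so that adding the four identities reconstructs the difference of four of the five terms defining $E^n$ in \eqref{ener}.

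The coupling terms are then handled exactly as in Proposition \ref{prop1}. The term $-(M\Psi^n,V^{n+1}+V^{n-1})$ arising from the $-M\Psi^n$ in \eqref{44}, combined with the centered identity $\tfrac{\Psi^{n+1}-\Psi^{n-1}}{2\Delta t}=V^n$ of \eqref{233}, supplies the remaining energy contribution $\tfrac{1}{2\Delta t}(\|\Psi^{n+1}\|_M^2-\|\Psi^{n-1}\|_M^2)$ up to the antisymmetric quantity $(M\Psi^{n+1},V^n)-(M\Psi^n,V^{n+1})$, which telescopes once summed over $l=0,\dots,n$. The symmetric couplings $-(K\Phi^n,U^{n+1}+U^{n-1})$ and $(KU^n,\Phi^{n+1}+\Phi^{n-1})$, and likewise the pair in $(K\Psi,V)$, cancel in the same antisymmetric, telescoping fashion. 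The non-symmetric terms $(S\Psi^n,U^{n+1}+U^{n-1})$ and $-(S\Phi^n,V^{n+1}+V^{n-1})$ are recast through \eqref{233} as a centered difference of $(S\Psi^{n+1},\Phi^{n+1})$ and collapse, exactly as in \eqref{02}, to a telescoping term plus an $\varepsilon(\Delta t)$ remainder that vanishes as $\Delta t\to 0$. After summation over $l$, the initial conditions \eqref{ci}--\eqref{ci1} and the boundary conditions \eqref{BC} remove the surviving boundary contributions.

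The genuinely new step is the nonlinear damping. Pairing the averaged feedback $-\tfrac12 M(V^{n+1}+V^{n-1})|V^n|$ of \eqref{44} with the multiplier $(V^{n+1}+V^{n-1})$ produces $-\tfrac12(M(V^{n+1}+V^{n-1})|V^n|,V^{n+1}+V^{n-1})$, that is the term $-\tfrac12\|V^{n+1}+V^{n-1}\|_M^2|V^n|$ on the right of \eqref{enrg2}. This is precisely the reason for discretizing $V^n|V^n|$ by the average $\tfrac12(V^{n+1}+V^{n-1})|V^n|$: the multiplier then meets the averaged factor and yields a weighted perfect square, which is non-positive because the nonlinearity $|V^n|$ enters with non-negative (componentwise) weights. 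Collecting the telescoped conservative part and this single dissipative contribution gives \eqref{enrg2}, and the dissipation law $E^n\le E^0$ then follows by summing the non-positive two-step increments $E^{l+1}-E^{l-1}\le 0$ and controlling $E^1$ through the initial data \eqref{ci1}.

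The step I expect to be the main obstacle is the treatment of the non-symmetric matrix $S$: in contrast with the $M$- and $K$-couplings, the $S$-terms do not cancel algebraically and only telescope up to the $\varepsilon(\Delta t)$ remainders, so that \eqref{enrg2} is exact only in the limit, as was already the case in Proposition \ref{prop1}. A secondary delicate point is to make precise the quantity $\|V^{n+1}+V^{n-1}\|_M^2|V^n|$ and to verify that the componentwise nonlinearity combined with the mass matrix really defines a non-negative quadratic form (transparent for a lumped, diagonal $M$), since it is this sign that guarantees the dissipativity asserted in \eqref{enrg2}.
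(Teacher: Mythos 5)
Your proposal is correct and takes essentially the same route as the paper's own proof: the same discrete multipliers $(U^{n+1}+U^{n-1})$, $(V^{n+1}+V^{n-1})$, $K(\Phi^{n+1}+\Phi^{n-1})$, $K(\Psi^{n+1}+\Psi^{n-1})$, the same telescoping of the $M$- and $K$-terms, the same appeal to the identity \eqref{02} to dispose of the non-symmetric $S$-couplings up to an $\varepsilon(\Delta t)$ remainder, summation over $l=0,\ldots,n$, and the same identification of the averaged nonlinear feedback paired with its own multiplier as the non-positive term $-\tfrac12\|V^{n+1}+V^{n-1}\|_M^2\,|V^n|$. Your two closing caveats (that the identity is exact only up to the $\varepsilon(\Delta t)$ remainder inherited from the $S$-terms, and that the componentwise meaning of $|V^n|$ inside the $M$-weighted square needs to be made precise for the sign claim) are real issues that the paper's proof shares and glosses over, so they do not constitute a departure from its argument.
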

  \begin{proof}
  We multiply the  equations  (\ref{44})$_{1,2}$    by $(U^{n+1}+U^{n-1})$ and $(V^{n+1}+V^{n-1})$,  respectively, and we obtain
\begin{eqnarray*}\label{12364}
  \frac{1}{2\Delta t} [\|U^{n+1}\|^2_M-\|U^{n-1}\|^2_M] &=&-K(\Phi^n, U^{n+1})+(K\Phi^n,U^{n-1})   \nonumber\\ &&+ (S\Psi^n ,U^{n+1})- (S\Psi^n,U^{n-1}),
     \end{eqnarray*}
    and
    \begin{eqnarray*}\label{12364}	
 	 &\displaystyle \frac{1}{2\Delta t}&  [\|V^{n+1}\|^2_M-\|V^{n-1})\|^2_M]=-(K\Psi^n,V^{n+1}) +(K\Psi^n,V^{n-1})-(S\Phi^n,V^{n+1})\nonumber\\ &+&(S\Psi^n,V^{n-1}) -\frac{1}{2}(M\Psi^n,V^{n+1})
 	+ \frac{1}{2}(M\Psi^n,V^{n-1}) - \frac{1}{2}\|V^{n+1}+V^{n-1}\|^2_M |V^n|.\nonumber\\
 	 \end{eqnarray*}

  We multiply the equations  $(\ref{233})_{1,2}$  by $K (\Phi^{n+1}+\Phi^{n-1})$ and $K(\Psi^{n+1}-\Psi^{n-1})$, respectively, then  we have
  \begin{equation}
  \frac{1}{2 \Delta t}[ \|\Phi^{n+1}\|^2_M -\|\Phi^{n-1}\|^2_M]=(U^n,K\Phi^{n+1})+(U^n,K\Phi^{n-1}),
  \end{equation}
  and
  \begin{equation}
    \frac{1}{2\Delta t} [\|\Psi^{n+1}\|_M^2- \|\Psi^{n-1}\|^2_M]= (V^n,K\Psi^{n+1})+(V^n,K\Psi^{n-1}).
  \end{equation}
    
Taking the sum over $l=  0,\ldots,n $ and using the expression of the energy (\ref{ener}) and (\ref{02})  we deduce the estimate (\ref{enrg2}).\\
This completes the proof of Theorem \ref{th3}.
  \end{proof}

   The aim now is to show the behavior of the discrete  energy in terms of the discrete time variation. For that purpose, we write the system (\ref{T1})-(\ref{T3}) in a matrix form. Then, we perform the numerical simulation of the discrete energy. The numerical results  are exposed in Fig. \ref{fig:kiki}.
      \begin{figure}[h!]
  \includegraphics[width=5cm]{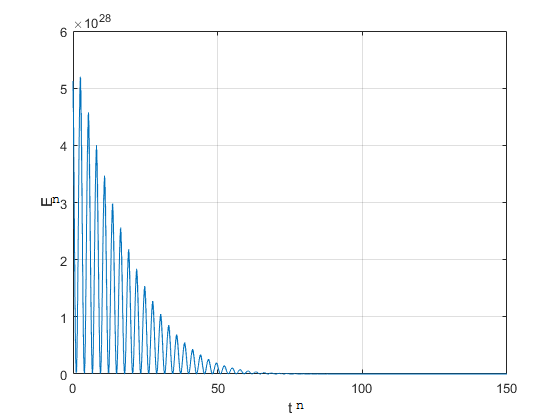}
\centering
\caption{ The behavior of the discrete energy $E^n$ in terms of the time step $t^n$.}
 \label{fig:kiki}
\end{figure}
\\
Fig. \ref{fig:kiki} shows  that the  decay of the discrete energy is  in this case slower than  the one obtained with a linear damping (see Fig.  \ref{fig:2222})  and here the lack of the exponential decay can be clearly identified. Instead of the exponential decay,  a typical polynomial profile for large time is in accordance with the analytical results established in the literature.

     Moreover, we express the behavior of $\log(E^n)$ as a function of $\log(t^n)$ as in the following numerical approximations.
     \begin{figure}[h!]
  \includegraphics[width=5cm]{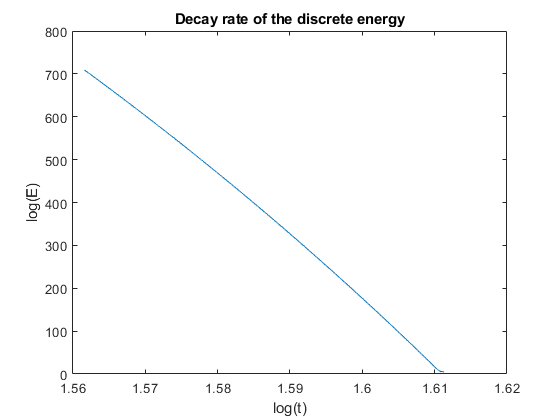}
\centering
\caption{Polynomial decay rate of the  discrete energy associated with the system (\ref{T1}): $\log(E^n)$ in terms of $\log(t^n)$. }
  \label{fig:kikok}
\end{figure}\\

\textbf{Comment 5} Based on the results obtained in Fig. \ref{fig:kikok} and using the constant values as in \eqref{data}, we can write $\log(E^n)= a_1 \log(t^n)+b_1$, with  $b_1=1.61$ and $a_1=-434.78$. Thanks to our numerical study we deduce an approximate value of the polynomial degree of the decay rate related to the discrete energy.
\subsection{Nonlinear damping  of the form $"\exp(-\frac{1}{s^2})"$}\mbox{}\\
In this subsection we consider the Timoshenko system subject to a nonlinear damping term which reads as follows:
 \begin{equation} \label{220}
     \left \{ \begin{array}{ll}
\varphi_{tt} -(\varphi_x+ \psi)_x=0,  &\hspace{1cm } (x,t) \in (0,L)\times \mathbb{R}_+,\\
         \psi_{tt}-(\psi_x)_x+(\varphi_x+\psi)+ g(\psi_t)=0, &\hspace{1cm }(x,t) \in (0,L)\times \mathbb{R}_+,
          \end{array}\right.
       \end{equation}
       where $g(x)=\exp\left(-1/x^2\right)$.\\

       This example is one of others that has been taken   to illustrate the optimal energy decay rate. More precisely, for this feedback, lower and upper energy estimates have been obtained, see \cite{alab} and \cite{art}. Here, the aim is to present some numerical results
completing thus the  theoretical results  already established.\\

  For the discrete  scheme, we will perform the same computations as previously done for the free wave equations. the only difference here is the nonlinear term that we discretize  with finite element method as follows. First, we have
  $$\psi_t=\sum_{i=0}^{N_x} \psi_i \  w'_i(x).$$
  Then, the nonlinear damping term is expressed as
  $$g(\psi_t)\simeq \sum_{i=0}^{N_x} g(\psi_i) \  w'_i(x).$$
Therefore,  the semi-discrete formulation reduces to looking for $(\Phi,U,\Psi,V)$ solution of
 \begin{equation}
  \left \{ \begin{array}{lrl}
   M\displaystyle{\frac{dU}{dt}}=-K\Phi +S\Psi,\vspace{.2cm} \\
   M \displaystyle{\frac{dV}{dt}}=-K\Psi-S\Phi-K g(\Psi),\vspace{.2cm}  \\
   \displaystyle{\frac{d\Psi}{dt}}=V,\vspace{.2cm}   \\
  \displaystyle{ \frac{d\Phi}{dt}}=U,
    \end{array}\right.
 \end{equation}
where, $g(\Psi)=(g(\psi_0),g(\psi_1),\ldots,g(\psi_{N_x}))$ is a vector  such that $g(\psi_0)=g(\psi_{N_x})=0$ due to the consideration of the homogeneous Dirichlet boundary conditions.

 After using the classical finite difference method for the discretization of the time derivative terms, we end up with the discrete form associated with the system (\ref{220}) together with the boundary conditions \eqref{BC} and the initial conditions (\ref{8}) which thus consists to find  $(\Phi^n,U^n,\Psi^n,V^n)$ such that

  \begin{equation}\label{44}
  \left \{ \begin{array}{lrl}
   M \displaystyle{\frac{U^{n+1}-U^{n-1}}{2\Delta t}}=-K\Phi^n  +S\Psi^n,\\ \\
   M\displaystyle{\frac{V^{n+1}-V^{n-1}}{2\Delta t}}=-K\Psi^n-S\Phi^n-M\Psi^n - K  g(\Psi^n),
      \end{array}\right.
 \end{equation}
   \begin{equation}\label{233}
  \left \{ \begin{array}{lrl}
 \displaystyle{  \frac{\Psi^{n+1}-\Psi^{n-1}}{2\Delta t}}=V^n,\\ \\
  \displaystyle{ \frac{\Phi^{n+1}-\phi^{n-1}}{2 \Delta t}}=U^n,
    \end{array}\right.
 \end{equation}
where $g(\Psi^n)=(g(\psi_0(t^n),g(\psi_1(t^n),\ldots ,g(\psi_{N_x}(t^n) ),$ for  $n=0,\ldots, N_t.$

      For this type of nonlinear damping,   the conclusion of the numerical study related to the discrete energy in this case is  presented in  Fig. \ref{fig:kiko}.
        \begin{figure}[h!]
  \includegraphics[width=7cm]{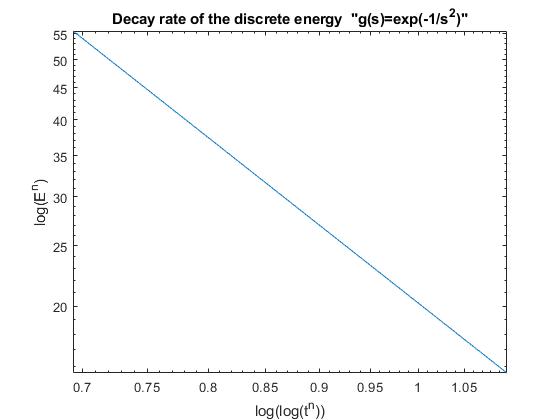}
\centering
\caption{Logarithmic decay rate of the discrete energy: $\log (E^n)$ as linear function of $\log(\log(t^n))$.}
 \label{fig:kiko}
\end{figure}\\
\textbf{Comment 6.} Figure \ref{fig:kiko}  shows a logarithmic  decay of the energy where we clearly obtained $\log (E^n)= a_2 \log(\log(t^n))+b_2,$ with $b_2=1.10$ and $a_2=-50$. Otherwise, the energy here has a logarithmic  decay, namely $E^n\simeq e^{b_2} (\log(t^n))^{a_2}$.

    \section{Conclusions and future work}\label{sec:4}
      In this work, we have addressed an important problem in mathematical analysis of beam, namely
 the problem of determining the  decay rate of the discrete energy  by taking into account a few dissipative mechanisms. As we already know,   Timoshenko system (\ref{1}) has two wave speeds  and we have proved numerically that is sufficient to consider only one dissipation mechanism  in order to obtain the exponential decay, for the case where the  speeds  are equal.
 Nevertheless, other dissipative cases have been considered in this article, we look for the behavior of the energy when the system is nonlinearly damped and we deduce an explicit (polynomial and logarithmic) decay rate of the discrete energy. One of the interesting futures of this work is the obtaining of  the approximate values of the constants (coefficients and monomial degrees) of the decay rate function in time of the discrete energy associated with the Timoshenko systems (\ref{1}), \eqref{100} and \eqref{T1} in an explicit manner.

 \section*{Acknowledgments}
 A part of this work was performed while the first author was visiting  LAMFA CNRS  UMR 7352  CNRS UPJV. The first author would like to thank all the LAMFA members for their hospitality and their help with warm thanks to Professor Olivier Goubet for many fruitful discussions.




\end{document}